\newtheorem*{rep@theorem}{\rep@title}
\newcommand{\newreptheorem}[2]{%
\newenvironment{rep#1}[1]{%
 \def\rep@title{#2 \ref{##1}}%
 \begin{rep@theorem}}%
 {\end{rep@theorem}}}
\newtheorem{theorem}{Theorem}[section]
\newtheorem{lemma}[theorem]{Lemma}
\newtheorem{corollary}[theorem]{Corollary}
\newtheorem{proposition}[theorem]{Proposition}
\newtheorem{claim}{Claim}
\theoremstyle{definition}
\newtheorem{definition}[theorem]{Definition}
\newtheorem{remark}[theorem]{Remark}
\title{Adjacency of three-manifolds and Brunnian links}
\author{Tye Lidman}
\address{Department of Mathematics, North Carolina State University, Raleigh, NC 27607, USA}
\email{tlid@math.ncsu.edu}
\author{Allison H. Moore}
\address{Department of Mathematics \& Applied Mathematics, Virginia Commonwealth University, 1015 Floyd Avenue, Box 842014, Richmond, VA 23284-2014, USA}
\email{moorea14@vcu.edu}
\begin{document}

\maketitle
\begin{abstract}
We introduce the notion of adjacency in three-manifolds. 
A three-manifold $Y$ is $n$-adjacent to another three-manifold $Z$ if there exists an $n$-component link in $Y$ and surgery slopes for that link such that performing Dehn surgery along any nonempty sublink yields $Z$. 
We characterize adjacencies from three-manifolds to the three-sphere, providing an analogy to Askitas and Kalfagianni's results on $n$-adjacency in knots.
\end{abstract}

\section{Introduction}

A knot $K$ is said to be $n$-adjacent to a knot $K'$ if there exists a diagram of $K$ containing a set of $n$ crossings such that changing any nonempty subset of them yields a diagram of $K'$. Any knot that is adjacent to the unknot is, of course, unknotting number one, but the condition is much more restrictive. 
For example, nontrivial knots which are $n$-adjacent to the unknot for $n\geq 3$ have trivial Alexander polynomial, are non-fibered, non-alternating, and have vanishing Vassiliev invariants of degree less than $2n-1$. 
These restrictions on their invariants are shown by Askitas-Kalfagianni \cite{AK} to result from a diagrammatic characterization of $n$-adjacent knots, $n\geq3$, as those constructed from certain spatial chord diagrams called Brunnian-Suzuki graphs \cite[Theorem 4.4]{AK}. A Suzuki graph is given by a collection of weighted, embedded arcs along an unknotted circle in $S^3$, where the arcs describe a pattern along which a sequence of bandings and clasps converts the graph into a knot. 

In this article, we generalize the concept of $n$-adjacency from knots to three-manifolds. We say that a closed, oriented three-manifold $Y$ is {\em integrally $n$-adjacent} to another three-manifold $Z$ if there exists an $n$-component link $L$ and integral multi-slope $p$ of the link such that performing Dehn surgery along any nonempty subset of $L$ yields $Z$. The triple realizing the adjacency will be denoted $(Y,L,p)$. {\em Rational adjacency}, denoted $(Y,L,\alpha)$, is defined similarly where rational surgeries are permitted.  In an analogy to Askitas-Kalfagianni, we characterize all $n$-adjacencies to the three-sphere as those arising from particular Dehn surgeries along Brunnian-like links.
This recovers Askitas-Kalfagianni's diagrammatic characterization of knots adjacent to the unknot, and we similarly obtain a statement on the finite type invariants of three-manifolds. 

Given a link in a three-manifold, and a choice of surgery slopes, the core curves of the surgery solid tori produce a new link in the surgered manifold.  We call this the {\em core} or {\em dual} of the surgery.  
Using dual links, it is in fact quite easy to construct examples of adjacent manifolds for all $n$: Let $J$ be a Brunnian link in $S^3$, and perform $(\pm 1,\ldots, \pm 1)$-surgery on $J$. This yields a homology sphere $Y$ that is $n$-adjacent to the three-sphere via the dual link. The homology sphere will be distinct from $S^3$ provided that $J$ is a non-trivial link.  For a concrete example, $(1,1,1)$-surgery on the Borromean rings produces the Poincar\'e homology sphere, which is $3$-adjacent to $S^3$, as shown in Figure \ref{fig:poincare}.  In fact, we will see shortly that all integral $n$-adjacencies for $n \geq 3$ arise from surgery on Brunnian links.

\begin{figure}
    \centering
    \begin{tikzpicture}

    \node[anchor=south west,inner sep=0] at (0,0) {\includegraphics[width=2in]{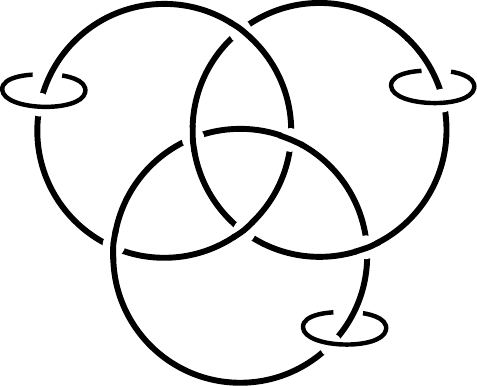}};

    \node[label=above right:{$1$}] at (1.5,2.8){};
    \node[label=above right:{$1$}] at (0.95,2){};
    \node[label=above right:{$1$}] at (3.5,2){};
    \node[label=above right:{$0$}] at (-0.5,2.3){};
    \node[label=above right:{$0$}] at (4.3,0.4){};
    \node[label=above right:{$0$}] at (5,2.3){};
    \end{tikzpicture}
    \caption{The Poincar\'e homology sphere results from $(1, 1, 1)$-surgery on the Borromean rings. The meridians with surgery slopes $(0, 0, 0)$ realize a $3$-adjacency of the Poincar\'e homology sphere to $S^3$.}
    \label{fig:poincare}
\end{figure}

Next, consider the example in Figure \ref{fig:hopf chain}. An exercise in Kirby calculus shows that $(1/2, 1, 1/2)$-surgery yields $-L(3, 1)$ and induced surgery on any proper sublink yields $S^3$.  This surgery description will demonstrate that $-L(3,1)$ is 3-adjacent to $S^3$. Thus, Hopf links can and do appear in the characterization of rational adjacencies. 
We define the family of \emph{Hopf-Brunnian} links as follows. An $n$-component link is Hopf-Brunnian if all $n-1$ component sublinks are split unions of Hopf links and unknots. We prove:

\begin{figure}
    \centering
    \begin{tikzpicture}

    \node[anchor=south west,inner sep=0] at (0,0) {\includegraphics[width=2in]{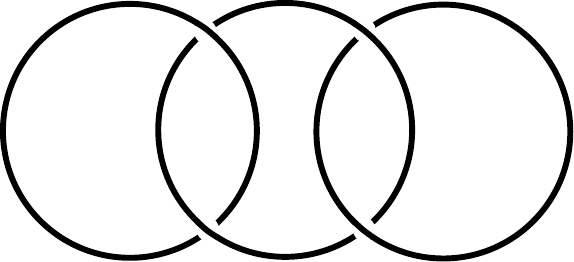}};

    \node[label=above right:{$1/2$}] at (-1,0.6){};
    \node[label=above right:{$1$}] at (0.8,0.6){};
    \node[label=above right:{$1/2$}] at (5,0.6){};
    \end{tikzpicture}
    \caption{This three-component Hopf-Brunnian link $J$ admits a $(1/2, 1, 1/2)$-rational surgery to $-L(3, 1)$. The core of $J$ after surgery is a link $L$ realizing the 3-adjacency of $-L(3,1)$ to $S^3$.}
    \label{fig:hopf chain}
\end{figure}

\begin{theorem}
\label{hopf-brunnian n}
The triple $(Y, L, \alpha)$ realizes an $n$-adjacency to $S^3$ with surgery core $J$ if and only if $J$ is Hopf-Brunnian, the dual surgery slopes of $J$ are of the form $1/k_i$, $k_i\in \mathbb{Z}^*$, and any proper Hopf sublink of $J$ must have surgery slopes $\pm(1, 1/2)$ or $\pm(1/2, 1)$.
\end{theorem}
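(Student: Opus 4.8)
The plan is to dualize the surgery and reduce the statement to a classification of links in $S^3$.

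\textbf{Reduction via the dual link.} Assume $(Y,L,\alpha)$ realizes an $n$-adjacency to $S^3$ (with $n\ge 2$; $n=1$ is degenerate). Applying the hypothesis to $L\subseteq L$ gives $Y_L(\alpha)=S^3$, so the surgery core $J\subset S^3$ is defined, and there is a canonical identification $E:=Y\smallsetminus\nu(L)=S^3\smallsetminus\nu(J)$ under which the surgery slope $\alpha_i$ on $L_i$ is the meridian $\mu_i$ of $J_i$ and the meridian of $L_i$ is the dual slope $\beta_i$ of $J_i$: filling every $J_i$ along $\mu_i$ recovers $S^3$, filling every $J_i$ along $\beta_i$ recovers $Y$. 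For a sublink $L'\subseteq L$ with dual sublink $J'$, the manifold $Y_{L'}(\alpha|_{L'})$ is obtained from $E$ by filling the components of $J'$ along $\mu$ and the rest along $\beta$, so $Y_{L'}(\alpha|_{L'})=(S^3)_{J\smallsetminus J'}(\beta|_{J\smallsetminus J'})$. As $L'$ runs over the nonempty sublinks of $L$, $J\smallsetminus J'$ runs over the proper sublinks of $J$; hence $(Y,L,\alpha)$ realizes an $n$-adjacency to $S^3$ with core $J$ exactly when
\[
(S^3)_{J''}(\beta|_{J''})=S^3 \quad\text{for every proper sublink } J''\subsetneq J,
\]
and conversely any $J\subset S^3$ with dual slopes $\beta$ satisfying this produces, via the dual link $L\subset Y:=(S^3)_J(\beta)$ of $(J,\beta)$, such a triple. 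So the theorem reduces to: $(J,\beta)$ has the displayed property if and only if (i) every $(n-1)$-component sublink of $J$ is a split union of Hopf links and unknots, (ii) $\beta_i=1/k_i$ with $k_i\in\mathbb Z^*$, and (iii) every proper Hopf sublink carries slopes $\pm(1,1/2)$ or $\pm(1/2,1)$.

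\textbf{The ``if'' direction} is Kirby calculus. A proper sublink of $J$ lies inside some $(n-1)$-component sublink, hence by (i) is itself a split union of Hopf links and unknots, and surgery on a split union is the connected sum of the surgeries on the summands. It therefore suffices to note that $1/k$-surgery on an unknot is $S^3$ (a Rolfsen twist, or $L(1,k)=S^3$) and that $\pm(1,1/2)$- or $\pm(1/2,1)$-surgery on a Hopf link is $S^3$ (slam-dunk the integrally framed component to get $\mp 1$-surgery on an unknot; the continued fraction $1-1/(\pm1/2)=\mp1$ has unit numerator). Every proper sublink surgery is then a connected sum of copies of $S^3$.

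\textbf{The ``only if'' direction: small sublinks.} Applied to the singleton $\{J_i\}$, the hypothesis gives $(S^3)_{J_i}(\beta_i)=S^3$, so by the Gordon--Luecke theorem $J_i$ is unknotted and $\beta_i=1/k_i$; moreover $k_i\ne 0$, since a meridional dual slope would make $L_i$ a superfluous component. This is (ii). For a two-component sublink $J_i\cup J_j$ (proper when $n\ge 3$; for $n=2$ it is all of $J$ and nothing more is needed), $1/k_i$-surgery on the unknot $J_i$ is a $(-k_i)$-fold Rolfsen twist carrying $J_j$ to a knot $J_j'$ with framing $1/k_j-k_i\ell^2$, where $\ell=\operatorname{lk}(J_i,J_j)$. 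Since $(S^3)_{\{J_i,J_j\}}(\beta_i,\beta_j)=S^3$, Gordon--Luecke forces $J_j'$ unknotted and $1/k_j-k_i\ell^2=\pm1/m$. Writing this as $(1-k_ik_j\ell^2)/k_j$, which is already in lowest terms, we must have $|1-k_ik_j\ell^2|=1$, so either $\ell=0$, or $\ell=\pm1$ with $k_ik_j=2$, i.e.\ $\{k_i,k_j\}=\{1,2\}$ or $\{-1,-2\}$ --- exactly the slope constraint in (iii).

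\textbf{The main obstacle.} What remains is to upgrade this numerical dichotomy to the geometric statements: $\ell=0$ should force $J_i\cup J_j$ split and $\ell=\pm1$ should force it to be the Hopf link. Here $J_j$ is a knot of winding number $|\ell|$ in the solid torus $V=S^3\smallsetminus\nu(J_i)$ that is unknotted in $S^3$, remains unknotted after the Rolfsen twist, and admits a solid-torus Dehn surgery (the $1/k_j$-filling returns the exterior of the unknotted image of $J_i$). I would feed these constraints into the classification of knots in solid tori admitting solid-torus Dehn surgeries (Berge, Gabai) to conclude that $J_j$ either lies in a ball in $V$ (the split case) or is isotopic to the core of $V$ (the Hopf case), ruling out patterns such as the Mazur pattern via the unknottedness-after-twisting condition; this is the technical heart. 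Finally, for $n\ge 4$ one promotes the pairwise picture to the global statement (i): each $(n-1)$-component sublink is a split union of Hopf links and unknots by induction, and a short analysis of how such pieces may share a component --- excluding three-component chains and all-pairwise-Hopf triples by direct slam-dunk computations that yield lens spaces rather than $S^3$ --- forces $J$'s $(n-1)$-sublinks into the asserted form; the cases $n\le 3$ are checked directly.
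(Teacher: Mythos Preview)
Your reduction via the dual link and the ``if'' direction are correct and coincide with the paper's Proposition~2.1 and the obvious Kirby calculus.  The treatment of one- and two-component sublinks is also right: Gordon--Luecke pins down the slopes $1/k_i$, the linking-matrix identity $|1-k_ik_j\ell^2|=1$ gives the numerical dichotomy, and feeding a winding-$0$ or winding-$1$ knot-in-a-solid-torus with a solid-torus surgery into Gabai indeed yields ``in a ball'' or ``core'', so each two-component sublink is a Hopf link or a $2$-component unlink.  This is exactly how the paper begins Propositions~3.1 and~3.2, and it already proves the theorem for $n\le 3$ (since Hopf-Brunnian only constrains the $(n-1)$-component sublinks).

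The gap is in your inductive step.  For $n\ge 4$ you assert that ``each $(n-1)$-component sublink is a split union of Hopf links and unknots by induction,'' but the inductive hypothesis of the theorem applied to an $(n-1)$-component sublink $J'$ only tells you that the $(n-2)$-component sublinks of $J'$ are split unions; it says nothing about $J'$ itself.  What you actually need is the stronger self-adjacency statement (the paper's Proposition~3.2): if surgery on \emph{every} sublink of $J'$, including $J'$ itself, returns $S^3$, then $J'$ is a split union.  Your ``short analysis'' excluding chains and all-Hopf triples via slam-dunks is purely homological---it reproduces the linking-number constraints of the paper's Proposition~2.3 but does not give geometric splitting.  Concretely, take $n=4$ and a three-component sublink in which $J_1\cup J_2$ is a Hopf link while $J_3$ is algebraically (hence, by your Gabai step, geometrically) unlinked from $J_1$ and from $J_2$ separately; nothing you have written shows $J_3$ is split from $J_1\cup J_2$ jointly.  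The paper closes this by observing that both the trivial filling and the $1/2$-filling on $\partial\nu(J_2)$ produce reducible manifolds (unlink exteriors), and then invoking \cite[Corollary~2.4.7]{CGLS}: two reducible fillings at distance $\ge 2$ on an irreducible manifold force a lens-space summand, impossible in a link exterior in $S^3$; hence the exterior of $J'$ is reducible and $J'$ is split.  A parallel argument using Gordon--Litherland handles the all-linking-zero (Brunnian) case.  Without some input of this type from the theory of reducible Dehn fillings, your induction does not close.
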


\begin{corollary}
\label{integral adjacency Brunnian}
The triple $(Y, L, p)$ realizes an integral $n$-adjacency to $S^3$ with surgery core $J$ if and only if $J$ is Brunnian and the dual surgery slopes of $J$ are $\pm 1$ (signs need not be consistent).
\end{corollary}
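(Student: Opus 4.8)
The plan is to derive Corollary~\ref{integral adjacency Brunnian} from Theorem~\ref{hopf-brunnian n} by translating the statement ``the adjacency multi-slope $p$ on $L$ is integral'' into a condition on the dual surgery slopes of the surgery core $J$. The one input I would prove by hand is the following slope dictionary. Because $L$ is the core of the surgery on $J$, the surgery tori $\partial N(L_i)$ and $\partial N(J_i)$ are identified; under this identification $p_i$ is exactly the meridian $\mu_{J_i}$ (filling so that $p_i$ bounds produces a core $J_i$ whose meridian is $p_i$), while the dual surgery slope on $J_i$ --- the slope on $J_i$, with its Seifert framing in $S^3$, that recovers $Y$ --- is the old meridian $\mu_{L_i}$. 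By Theorem~\ref{hopf-brunnian n} this last slope is $1/k_i$, so $\mu_{L_i} = \mu_{J_i} + k_i\lambda_{J_i}$ on the common torus and hence $\mu_{J_i}\cdot\mu_{L_i} = \pm k_i$. Therefore $p_i = \mu_{J_i}$ is a longitude of $L_i$, i.e.\ $p_i$ is integral, if and only if $|k_i| = 1$, i.e.\ if and only if the dual surgery slope on $J_i$ is $\pm 1$. (Equivalently, the denominator of $p_i$ equals the denominator of $1/k_i$.) Thus $(Y,L,p)$ is an integral $n$-adjacency to $S^3$ exactly when every dual surgery slope of $J$ equals $\pm 1$.

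Granting this, the forward implication is immediate. If $(Y,L,p)$ realizes an integral $n$-adjacency to $S^3$ with surgery core $J$, then Theorem~\ref{hopf-brunnian n} gives that $J$ is Hopf-Brunnian, that its dual surgery slopes have the form $1/k_i$, and that any proper Hopf sublink of $J$ has dual slopes $\pm(1,1/2)$ or $\pm(1/2,1)$. Integrality of $p$ and the dictionary force $|k_i| = 1$ for all $i$, so the dual slopes are all $\pm 1$; in particular no component of $J$ has a dual slope $1/2$, so $J$ admits no proper Hopf sublink. Since every $(n-1)$-component sublink of $J$ is then a split union of unknots --- hence an unlink --- every proper sublink of $J$ is an unlink, so $J$ is Brunnian.

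Conversely, suppose $J$ is Brunnian with all dual surgery slopes equal to $\pm 1$. Then $J$ is Hopf-Brunnian (each maximal proper sublink is a split unlink, in particular a split union of unknots), its dual slopes have the form $1/k_i$ with $k_i = \pm 1 \in \mathbb{Z}^{*}$, and it has no proper Hopf sublink, so the hypothesis of Theorem~\ref{hopf-brunnian n} about Hopf sublinks holds vacuously. Theorem~\ref{hopf-brunnian n} then produces the triple $(Y,L,\alpha)$ from $J$ --- with $Y$ the result of the dual surgery and $L$ its core --- realizing an $n$-adjacency to $S^3$, and by the dictionary $\alpha$ is integral because each $|k_i| = 1$. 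This establishes the equivalence; the very small cases ($n \le 2$, and the degenerate case in which $J$ is an unlink and $Y = S^3$) are subsumed, since there the Brunnian and Hopf-Brunnian conditions agree and carry no content.

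The only place where care is needed is the slope dictionary of the first paragraph: fixing the identification of the two surgery tori and the meridian bookkeeping with consistent conventions, and checking the outcome against the worked examples --- $(1,1,1)$-surgery on the Borromean rings should give an integral adjacency (indeed with slopes $(0,0,0)$, as in Figure~\ref{fig:poincare}), while the Hopf chain of Figure~\ref{fig:hopf chain} should give genuinely non-integral adjacency slopes. Everything downstream is a formal consequence of Theorem~\ref{hopf-brunnian n}, together with the observations that a split union of Hopf links and unknots with no Hopf summand is an unlink and that a link whose maximal proper sublinks are all unlinks is Brunnian; if the paper has already set up a lemma comparing a surgery slope with its dual, the first paragraph collapses to a citation.
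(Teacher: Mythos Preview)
Your proof is correct and matches the paper's intended argument. The paper does not write out a separate proof for this corollary, treating it as immediate from Theorem~\ref{hopf-brunnian n} together with the last sentence of Proposition~\ref{prop:dual} (``the adjacency is integral if and only if $\beta$ is integral''), which is exactly the slope dictionary you re-derive in your first paragraph; so, as you anticipated, that paragraph collapses to a citation of Proposition~\ref{prop:dual} and the surrounding discussion in Section~\ref{sec:dual}.
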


\begin{corollary}
\label{homology sphere}
If $Y$ is integrally $n$-adjacent to $S^3$ for $n\geq3$, or rationally $n$-adjacent to $S^3$ for $n\geq4$, then $Y$ is an integer homology sphere.
\end{corollary}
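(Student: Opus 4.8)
The plan is to compute $H_1(Y;\mathbb{Z})$ from a surgery diagram and show it vanishes. Let $J=J_1\cup\cdots\cup J_n\subset S^3$ be the surgery core of the adjacency, so that $Y$ is obtained from $S^3$ by Dehn surgery on $J$ along the dual slopes; write the $i$-th dual slope in lowest terms as $p_i/q_i$ with $q_i>0$. Then $H_1(Y;\mathbb{Z})$ is the cokernel of the integer matrix $\Lambda$ with $\Lambda_{ii}=p_i$ and $\Lambda_{ij}=q_i\,\mathrm{lk}(J_i,J_j)$ for $i\neq j$, so it suffices to prove $|\det\Lambda|=1$. Granting this, $H_1(Y;\mathbb{Z})=0$; since $Y$ is a closed, oriented, connected three-manifold (it is surgery on a link in $S^3$), Poincar\'e duality and universal coefficients then force $H_*(Y;\mathbb{Z})\cong H_*(S^3;\mathbb{Z})$, i.e.\ $Y$ is an integer homology sphere.

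For the integral case ($n\ge 3$) this is immediate from Corollary~\ref{integral adjacency Brunnian}: there $J$ is Brunnian, so every two-component sublink of $J$ is a two-component unlink, forcing $\mathrm{lk}(J_i,J_j)=0$ for all $i\ne j$; moreover each $p_i/q_i=\pm1$. Hence $\Lambda$ is diagonal with entries $\pm1$, and $|\det\Lambda|=1$.

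For the rational case ($n\ge 4$) I would invoke Theorem~\ref{hopf-brunnian n}: $J$ is Hopf-Brunnian, each dual slope has the form $1/k_i$ (so $p_i=\pm1$ and $q_i=|k_i|$), and every proper Hopf sublink of $J$ carries slopes $\pm(1,1/2)$ or $\pm(1/2,1)$. The key structural point is that, when $n\ge 4$, the Hopf sublinks of $J$ are \emph{pairwise disjoint}: if $J_i$ were Hopf-linked to distinct components $J_j$ and $J_k$, then since $n-1\ge 3$ there is an $(n-1)$-component sublink of $J$ containing $J_i, J_j, J_k$; by hypothesis this sublink is a split union of Hopf links and unknots, in which $J_i$ would have to lie simultaneously in the Hopf piece $\{J_i,J_j\}$ and in the Hopf piece $\{J_i,J_k\}$, which is impossible. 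Moreover, for $n\ge 3$ every two-component sublink of $J$ is a sublink of a split union of Hopf links and unknots, so $\mathrm{lk}(J_i,J_j)\ne 0$ exactly when $J_i\cup J_j$ is a Hopf sublink, and then $|\mathrm{lk}(J_i,J_j)|=1$. Putting this together and reindexing the components, $\Lambda$ becomes block diagonal: a $1\times 1$ block $(\pm1)$ for each component in no Hopf sublink, and, writing $\ell=\mathrm{lk}(J_i,J_j)$, a $2\times 2$ block $\begin{pmatrix} p_i & q_i\ell\\ q_j\ell & p_j\end{pmatrix}$ for each Hopf sublink $J_i\cup J_j$. In such a block the slope condition $\pm(1,1/2)$ or $\pm(1/2,1)$ forces $p_i$ and $p_j$ to share a sign (so $p_ip_j=1$) and $\{q_i,q_j\}=\{1,2\}$, while $\ell^2=1$; hence its determinant is $p_ip_j-q_iq_j\ell^2=1-2=-1$. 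Every block therefore has determinant $\pm1$, so $|\det\Lambda|=1$, as needed.

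The hard part is the structural claim in the rational case: that for $n\ge 4$ no component of $J$ meets two Hopf sublinks, so that $\Lambda$ decomposes into the harmless $1\times1$ and $2\times2$ blocks above. This is precisely where the hypothesis $n\ge 4$ (rather than $n\ge 3$) is needed: in the three-component Hopf chain of Figure~\ref{fig:hopf chain} one component lies in two Hopf sublinks, $\Lambda$ is not block diagonal, $|\det\Lambda|=3$, and the surgered manifold $\pm L(3,1)$ is not an integer homology sphere. One should also watch the sign bookkeeping inside the $2\times2$ blocks, since it is exactly the consistency of the sign in $\pm(1,1/2)$ that yields $p_ip_j=+1$ and pins the block determinant to $\pm1$, rather than leaving open the value $\pm3$.
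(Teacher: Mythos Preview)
Your proof is correct and is exactly the argument the paper intends: the corollary is stated immediately after Theorem~\ref{hopf-brunnian n} and Corollary~\ref{integral adjacency Brunnian} with no separate proof, and the natural derivation is precisely the linking-matrix computation you carry out. One small remark: the structural fact you extract from the Hopf-Brunnian condition for $n\ge 4$---that no component lies in two Hopf sublinks, so the presentation matrix is block diagonal---is already recorded in the paper as the last clause of Proposition~\ref{prop:linking} (``if $n\ge 4$, then each of $J_1$ and $J_2$ has linking number zero with all other components''), which is proved by a direct homological calculation and does not require the full geometric characterization of Theorem~\ref{hopf-brunnian n}; citing it would let you skip the combinatorial argument about $(n-1)$-component sublinks.
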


Note that in the case $n=2$, any $(1/k_1, 1/k_2), k_i \in \mathbb{Z}^*$, surgery along any link $J_1\cup J_2$ of unknotted components in $S^3$ will yield a manifold adjacent to the three-sphere. 
In order to prove Theorem~\ref{hopf-brunnian n}, we give a stronger characterization for self-adjacencies from the three-sphere to itself.

\begin{reptheorem}{prop:rational-self-adjacency}
The triple $(S^3, J, \alpha)$ realizes an $n$-adjacency to $S^3$ if and only if $J$ itself is a split union of Hopf links and unknots, all slopes $\alpha_i = 1/k_i$, where $k_i\in\mathbb{Z}^*$, and the surgery slopes of Hopf components are either $\pm(1, 1/2)$ or $\pm(1/2, 1)$. 
\end{reptheorem}

Note that the requirement that $J$ itself is a split union of Hopf links and unknots is stronger than requiring $J$ be Hopf-Brunnian. 

Using Proposition~\ref{prop:rational-self-adjacency}, we may now prove Theorem \ref{hopf-brunnian n}.  

\begin{proof}[Proof of Theorem \ref{hopf-brunnian n}]
In Proposition~\ref{prop:dual} below, we show that if $(Y,L, \alpha)$ realizes a rational $n$-adjacency to $S^3$, then the core of the surgery is an $n$-component link $J$ in $S^3$ and performing $\alpha$-framed surgery on $\cup_{i \in I } L_i$, for any $I \subset \{1,\ldots,n\}$, yields the same result as performing surgery on $\cup_{i \in [n] - I} J_i$ with the corresponding dual slopes (see Section \ref{sec:dual} for more details).  In particular, surgery on every proper sublink of $J$ in $S^3$ gives back $S^3$.       
By Proposition~\ref{prop:rational-self-adjacency}, every proper sublink of $J$ is a split union of Hopf links and an unlink. The Hopf pairs have surgery coefficients $\pm(1, 1/2)$ and split unknotted components have surgery coefficients $1/k$, $k\neq0$. \end{proof}

Let us return to $n$-adjacency in knots. 
Using Theorem~\ref{hopf-brunnian n}, we are now able to recover Askitas-Kalfagianni's characterization of knots which are $n$-adjacent to the unknot \cite[Theorem 4.4]{AK}. 
\begin{theorem}[Askitas-Kalfagianni] 
\label{ak again}
Let $K$ be $n$-adjacent to the unknot for $n \geq 3$.  Then $K$ is the realization of a Brunnian-Suzuki $n$-graph.  
\end{theorem}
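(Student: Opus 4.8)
The plan is to reinterpret $n$-adjacency to the unknot as an $n$-adjacency of a closed integer homology sphere to $S^3$, quote Corollary~\ref{integral adjacency Brunnian}, and translate the resulting Brunnian surgery description back into the graph language of \cite{AK}.

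First I would set up the surgery picture. A crossing change on a knot is realized by $(\pm 1)$-surgery on a \emph{crossing circle}, an unknot $c$ bounding a disk $D$ that meets the knot transversally in two points; for a knot $n$-adjacent to the unknot one may, as in the clasp/banding normalization of \cite{AK}, take crossing circles $c_1,\dots,c_n$ at the $n$ marked crossings of a suitable diagram with each $c_i$ null-homologous in the exterior of $K$, i.e.\ $D_i\cap K$ consists of two points of opposite sign. Let $\varepsilon_i\in\{\pm1\}$ be the coefficient effecting the $i$-th crossing change. Because the $c_i$ are crossing circles at distinct crossings of one diagram they bound disjoint disks, so $c_1\cup\dots\cup c_n$ is an unlink, and because $\operatorname{lk}(c_i,K)=0$, twisting along any $c_i$ preserves every integral framing of $K$. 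One may also recover this unlinkedness a posteriori: since crossing changes never alter the ambient $S^3$, the triple $(S^3,\,c_1\cup\dots\cup c_n,\,(\varepsilon_i))$ is an $n$-self-adjacency of $S^3$, so Proposition~\ref{prop:rational-self-adjacency} forces $c_1\cup\dots\cup c_n$ to be a split union of unknots.

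Next I would reformulate this as an adjacency of closed three-manifolds. Put $Y:=S^3_{+1}(K)$, an integer homology sphere containing $c_1\cup\dots\cup c_n$ disjointly from the surgery solid torus. For nonempty $I\subseteq\{1,\dots,n\}$, performing the $\varepsilon_i$-surgeries on $\{c_i:i\in I\}$ inside $Y$ amounts to first changing the crossings indexed by $I$ in $S^3$ --- which unknots $K$ by hypothesis while staying in $S^3$ --- and then $(+1)$-surgering the result, a surgery still measured against the zero framing by the linking-number-zero condition; the outcome is $S^3_{+1}(U)=S^3$. Hence $(Y,\,c_1\cup\dots\cup c_n,\,(\varepsilon_1,\dots,\varepsilon_n))$ realizes an integral $n$-adjacency of $Y$ to $S^3$. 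Since $n\geq3$, Corollary~\ref{integral adjacency Brunnian} applies: the surgery core $J=J_1\cup\dots\cup J_n\subset S^3$ is a Brunnian link with dual slopes $\pm1$, and by Proposition~\ref{prop:dual} we have $Y=S^3_{\pm1}(J)$ with surgery on every proper sublink of $J$ returning $S^3$.

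Finally I would recognize this data as a Brunnian-Suzuki $n$-graph. Tracking $K$ through the surgeries, the unknot $U$ obtained by changing all $n$ crossings sits in $S^3$ together with the surgery cores $J$ of the crossing circles, and reversing the surgeries --- carrying coefficients $\pm1$ on the components --- returns $U$ to $K$. This is exactly the data of a Suzuki $n$-graph: the base unknotted circle is $U$, the $n$ weighted embedded arcs record how the $J_i$ are threaded along $U$ together with the signs $\pm1$, and the ``bandings and clasps'' of the construction are the $\pm1$-surgeries; via the dual-surgery correspondence of Proposition~\ref{prop:dual} the Brunnian condition on the graph corresponds to the Brunnianness of $J$ supplied by Corollary~\ref{integral adjacency Brunnian}, so the graph is Brunnian-Suzuki and its realization is $K$. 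The main obstacle is this last step: making that correspondence precise, i.e.\ matching the purely three-dimensional output of Corollary~\ref{integral adjacency Brunnian} with Askitas-Kalfagianni's combinatorial definition of a Brunnian-Suzuki $n$-graph, tracking the (possibly incoherent) framing signs, and verifying that what gets encoded is the honest ``$n$ crossings in a diagram'' structure of $K$ rather than merely an abstract family of crossing circles in its exterior; by contrast, all of the three-manifold content is already contained in Corollary~\ref{integral adjacency Brunnian} and Proposition~\ref{prop:dual}.
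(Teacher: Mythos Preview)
Your route is genuinely different from the paper's. The paper passes to the branched double cover $\Sigma_2(K)$: the unknotting arcs lift to a strongly invertible link $J\subset\Sigma_2(U)=S^3$, and the Montesinos trick turns each crossing change into a \emph{half-integral} surgery on $J_i$. They then apply Proposition~\ref{prop:rational-self-adjacency} to the proper sublinks of $J$; since no slope is $\pm 1$, the Hopf option is excluded, $J$ is Brunnian with slopes $\pm 1/2$, and the quotient by the strong inversion (using uniqueness of strong inversions on unlinks) gives the Brunnian--Suzuki graph directly. By contrast you work with $S^3_{+1}(K)$ and integral crossing-circle surgeries, aiming at Corollary~\ref{integral adjacency Brunnian}.

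There is a real gap in your setup. The assertion that the crossing circles may be taken with $\operatorname{lk}(c_i,K)=0$ is not a consequence of $n$-adjacency as defined (crossings in a diagram): for an arbitrary crossing the two strands may pass through the crossing disk with the same sign, giving $\operatorname{lk}(c_i,K)=\pm 2$. If any $\ell_i\neq 0$, then $\varepsilon_i$-surgery on $c_i$ shifts the framing on $K$ by $-\varepsilon_i\ell_i^{\,2}=\mp 4$, so the induced surgery on the image unknot has coefficient $1\mp 4\notin\{\pm 1\}$ and the result is a lens space, not $S^3$; in particular $(S^3_{+1}(K),\,c,\,\varepsilon)$ is \emph{not} an adjacency to $S^3$ and Corollary~\ref{integral adjacency Brunnian} cannot be invoked. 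The ``clasp/banding normalization'' you cite does not supply this.

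Even granting $\ell_i=0$, your final step is where the content lies, and it does not go through by inspection. What Corollary~\ref{integral adjacency Brunnian} gives you is that a certain link $J$ is Brunnian \emph{in the copy of $S^3$ obtained after also $(+1)$-surgering $U$}; the Brunnian--Suzuki condition, however, is about arcs $\gamma_i$ attached to $U$ in the \emph{unsurgered} $S^3$, and specifically that each proper subcollection of arcs is standard in a spanning disk for $U$. The paper gets this by descending the unlinked sublinks of $J$ through the branched covering, where the uniqueness of strong inversions on unlinks forces the quotient arcs to be planar. Your $J$ is separated from $(U,\gamma_1,\dots,\gamma_n)$ by an extra Dehn surgery on $U$, and you have no analog of the equivariant/quotient argument to convert ``sublink of $J$ is an unlink'' into ``subgraph is standard.'' That translation is exactly what the double-cover approach is designed to provide.
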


\begin{proof}
Let $n\geq3$, and let $K$ be $n$-adjacent to the unknot $U$. The collection of unknotting arcs lifts to a strongly invertible link $L=L_1\cup \ldots \cup L_n$ in the double cover of $S^3$ branched over $K$, which we will call $Y$. Likewise, there is a corresponding collection of `knotting' arcs $\gamma_1, \ldots, \gamma_n$ from $U$ to $K$, which lifts to a strongly invertible link $J$ in $S^3$.  The Montesinos trick \cite{Montesinos} provides a half-integral multi-slope $\beta$ on $J$ such that $\beta_i$-surgery on $J_i$ corresponds to the associated crossing change downstairs on $\gamma_i$.

Applying crossing changes associated to any subset of crossing arcs $\cup_{i\in I}\gamma_i$ along $U$ yields the same result as applying the complementary $[n]-I$ crossing changes in $K$.
By the $n$-adjacency of $K$, this produces the unknot for any proper subset $I\subset \{1,\cdots, n\}$.
 At the level of the branched double cover, we see that surgery on every proper sublink of $J$ produces $S^3$. (See Proposition~\ref{prop:dual} below.) We now apply Proposition~\ref{prop:rational-self-adjacency} to the $(n-1)$-component sublinks of $J$.  Since $n\geq 3$ and none of the surgery slopes are $\pm 1$ (because they are all half-integral), all of the pairwise linking numbers of $J$ must be zero and the proper sublinks are unlinks, \emph{i.e.} $J$ is Brunnian, and each $\beta_i$ is $\pm 1/2$. 

Consider now the union of the arcs $\gamma_1,\ldots, \gamma_n$ together with the unknot $U$. Again by the Montesinos trick \cite{Montesinos}, an arc with weight $(w, z)$ in the terminology of \cite[Section 3]{AK} realizes a surgery in the branched double cover with slope $w+\frac{z}{2}$. The theorem will follow from the following claim.
\end{proof}
\begin{claim}
The graph $G = U\cup \bigcup_{i=1}^n\gamma_i$ is a Brunnian Suzuki $n$-graph.
\end{claim}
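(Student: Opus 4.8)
The plan is to check that $G$ satisfies the two requirements of a Brunnian Suzuki $n$-graph in the sense of \cite[Section 3]{AK}: that it is a Suzuki $n$-graph, and that realizing the operation along any \emph{proper} sub-collection of its arcs produces the unknot. The two inputs are the facts established above: that the half-integral slopes $\beta_i=\pm 1/2$ determine the weights of the arcs, and that the $n$-adjacency of $K$ forces every proper sub-collection of crossing changes on $U$ to be inessential.

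First I would record the weights. By the Montesinos trick the arc $\gamma_i$ with weight $(w_i,z_i)$ lifts to the component $J_i$ of $J$ with surgery slope $w_i+z_i/2$; since $\beta_i=\pm 1/2$, the weight of $\gamma_i$ is forced to be one of the two values corresponding to a single positive or negative crossing change, which is an admissible arc of a Suzuki graph. In particular the operation along $\gamma_i$ is an ordinary crossing change, so these operations commute and the result of performing the ones indexed by any subset is well defined. Together with the facts that the $\gamma_i$ are disjoint embedded arcs with endpoints on the unknotted circle $U$, this exhibits $G$ as a Suzuki $n$-graph.

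Next I would verify the Brunnian condition. Let $I\subsetneq\{1,\dots,n\}$; if $I=\emptyset$ then doing nothing to $U$ returns $U$. If $I\ne\emptyset$, then by the relation recorded above, performing the crossing changes indexed by $I$ along $U$ gives the same knot as performing the complementary crossing changes indexed by $[n]-I$ in $K$, and since $[n]-I$ is a nonempty proper subset and $K$ is $n$-adjacent to the unknot, this knot is the unknot. Hence realizing the operations of any proper sub-collection of the arcs of $G$ produces the unknot, which is the Brunnian condition. Finally, performing all $n$ operations carries $U$ to $K$, so $K$ is the realization of $G$ and the theorem follows.

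The step I expect to require the most care is not either verification above but the dictionary that links them to \cite{AK}: making sure the Montesinos correspondence between an arc weight $(w,z)$ and the surgery slope $w+z/2$ is applied to the correct configuration, namely $U$ together with the knotting arcs $\gamma_i$ rather than $K$ together with the unknotting arcs, and confirming that whatever local triviality the definition of a Suzuki $n$-graph in \cite{AK} demands of the arcs is supplied by what is already known about $J$. Concretely, under the branched double cover of $U$ the preimage of $\gamma_i$ is exactly $J_i$, the double of the arc along the branch locus, so triviality of a sub-configuration of $U\cup\bigcup_i\gamma_i$ corresponds to triviality of the associated sublink of $J$; since $J$ is Brunnian, every such sublink is an unlink, and these isotopies can be realized equivariantly (using strong invertibility) and pushed down to $S^3$. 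Once this translation is in hand, everything reduces to the unwinding of definitions above.
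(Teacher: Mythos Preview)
Your main argument (the first two paragraphs) verifies the wrong condition. What Askitas--Kalfagianni require for a Brunnian Suzuki $n$-graph is not that the \emph{realization} along each proper sub-collection of arcs returns the unknot, but that each proper subgraph is \emph{standard}: admissible (endpoints not interleaved along $U$) and isotopic rel $U$ to arcs lying in the spanning disk, with weights $(0,\pm1)$. The paper proves exactly these two structural facts. First, interleaved endpoints of $\gamma_i,\gamma_j$ would force $\ell k(J_i,J_j)\neq 0$ upstairs, contradicting that $J$ is Brunnian; this is the admissibility check, and you never make it. Second, to put the arcs of a proper subgraph into the spanning disk, one uses that the corresponding sublink of $J$ is an unlink and then invokes the \emph{uniqueness} of the strong inversion on an unlink \cite{KT:Split} to descend the unlinking isotopy to an isotopy of arcs in $S^3$; this is the substantive step.

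You do gesture at the correct argument in your third paragraph, but you frame it as a routine dictionary to be checked rather than the proof itself, and the crucial justification is missing: ``these isotopies can be realized equivariantly (using strong invertibility)'' is precisely the nontrivial point, and it requires a uniqueness result for the involution on the unlink, not merely that $J$ is strongly invertible. Your actual verification in paragraph two, that proper sub-collections of crossing changes on $U$ return $U$, is a consequence of standardness of the subgraphs but not a substitute for it; indeed this fact is immediate from the $n$-adjacency hypothesis and plays no role in the paper's proof of the claim.
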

\begin{proof}[Proof of claim]
    Notice that no pair of arcs $\gamma_i$ and $\gamma_j$ have endpoints interleaved along the unknot because they would lift to a link $J_i\cup J_j$ of nonzero linking number, a contradiction. 
    This means that $G$ is \emph{admissible}, in the terminology of \cite[Definition 3.2]{AK}.  
    For any proper subset $I\subset \{1, \ldots, n\}$ of components of $J_1\cup\ldots \cup J_n$, the quotient under $\tau$ is a proper subset of the $\gamma$ arcs. Because each subset of $J$ is an unlink, and there is a unique strong inversion $\tau$ on the unlink \cite{KT:Split}, these descend to arcs embedded disjointly in the spanning disk for the unknot. 
    Morever, because the surgery slopes on each $J_i$ are $\pm 1/2$, these descend to weighted arcs of the form $(0, \pm 1)$. Thus, subgraphs of $G$ with $n-1$ arcs are standard.
    The graph $G$ is therefore a Brunnian-Suzuki $n$-graph, as in \cite[Definition 3.4]{AK}.
\end{proof}

Note that Theorem \ref{ak again} is the key ingredient in the claimed vanishing of the Vassiliev invariants of a knot $n$-adjacent to the unknot, as mentioned above.

In analogy with the work of Askitas-Kalfagianni, we are also able to apply Theorem~\ref{hopf-brunnian n} more generally to prove a vanishing result for finite-type invariants of homology spheres.  (For a quick survey on finite-type invariants, see \cite{LinSurvey}.)    
\begin{corollary}
Let $Y$ be a homology sphere which is integrally $n$-adjacent to $S^3$.  Then all finite-type invariants of order less than $2n-4$ vanish.  
\end{corollary}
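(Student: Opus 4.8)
The plan is to feed the structural description of Corollary~\ref{integral adjacency Brunnian} into the clasper calculus of Goussarov and Habiro and the Ohtsuki filtration $\mathcal{F}_0\supseteq\mathcal{F}_1\supseteq\cdots$ of the rational vector space spanned by integral homology spheres (for background see \cite{LinSurvey}). By Corollary~\ref{integral adjacency Brunnian}, $Y$ is obtained from $S^3$ by surgery with all framings $\pm1$ on an $n$-component Brunnian link $J$. A first, cheap observation: surgery on every proper sublink of $J$ returns $S^3$, because proper sublinks of a Brunnian link are unlinks and $\pm1$-surgery on an unlink is $S^3$. Hence the inclusion--exclusion bracket over the sublinks of $J$ collapses, $\sum_{J'\subseteq J}(-1)^{|J'|}[S^3_{J'}]=(-1)^n([Y]-[S^3])$, and since $J$ is algebraically split with framings $\pm1$ this bracket lies in $\mathcal{F}_n$; already this gives that all finite-type invariants of order less than $n$ agree on $Y$ and $S^3$. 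That suffices only for $n=3$ and $n=4$, and for the sharp bound $2n-4$ one must exploit Brunnian-ness beyond the triviality of proper sublinks.

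The essential input is that an $n$-component Brunnian link is as trivial as possible from the clasper point of view: up to higher-order clasper moves it is obtained from the $n$-component unlink $U_n$ by surgery on a disjoint union of tree claspers each with $n-2$ nodes (trivalent vertices), every leaf grasping a single component of $U_n$ once — equivalently, the first potentially nonzero Milnor invariants of an $n$-component Brunnian link have length $n$; this is essentially Habiro's clasper description of Brunnian links, see also Miyazawa--Yasuhara. One then carries out the $\pm1$-surgery on $J$ by first doing the $\pm1$-surgery on $U_n$ — which returns $S^3$ — while carrying along the tree claspers; afterwards each leaf has become a $\pm1$-framed trivial circle, so each $(n-2)$-node tree clasper turns into a clasper in $S^3$ whose surgery is equivalent to $\pm1$-surgery on a $(2n-4)$-component algebraically split link. (For $n=3$ this is already visible in the paper: $\pm1$-surgery on the Borromean rings is $(\pm1,\pm1)$-surgery on a Whitehead link, so the Poincar\'e sphere differs from $S^3$ by the bracket of a two-component link.) Consequently $[Y]-[S^3]$ lies in $\mathcal{F}_{2n-4}$.

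Finally, by the definition of the filtration, a finite-type invariant of order less than $2n-4$ — that is, of order at most $2n-5$ — annihilates $\mathcal{F}_{2n-4}$, so, normalized to vanish on $S^3$, it vanishes on $Y$. The hard part will be the middle step: tracking the framings and the leaf structure of the tree claspers through the surgery on $U_n$ carefully enough to certify that the surviving object is genuinely modeled on a $(2n-4)$-component algebraically split link rather than merely on the $n$-component link $J$ itself — in other words, that one is invoking the full Goussarov--Habiro comparison between clasper degree and filtration depth, not just the naive bound from the first paragraph. A lesser point is to fix conventions so that the Casson invariant has order at least $2$, which is what makes the case $n=3$ (the Poincar\'e sphere, on which the Casson invariant does not vanish) consistent with the bound, there reading ``order less than $2$''.
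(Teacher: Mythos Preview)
Your approach is essentially the paper's: reduce via Corollary~\ref{integral adjacency Brunnian} to $\pm1$-surgery on an $n$-component Brunnian link, then invoke the finite-type vanishing bound $2n-4$ for such surgeries. The paper simply cites \cite[Theorem 1.1]{Meilhan} for that second step---precisely the clasper-theoretic result whose mechanism you are sketching---so the ``hard part'' you flag is already handled in the literature and need not be reproved.
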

\begin{proof}
By Corollary~\ref{integral adjacency Brunnian}, $Y$ is obtained by surgery on an $n$-component Brunnian link.  The required vanishing result is given by \cite[Theorem 1.1]{Meilhan}.
\end{proof}

\section{Dehn surgery}

\subsection{The dual perspective}
\label{sec:dual}
Let $Y$ be a closed, oriented three-manifold, and let $L=L_1\cup...\cup L_n$ denote a link in $Y$. 
Let $\alpha=(\alpha_1, \ldots, \alpha_n)$ denote a multi-slope on $L$. 
The notation $Y_{\alpha}(L)$ denotes the three-manifold obtained by performing $\alpha_i$ Dehn surgery along $L_i$ for all $i=1, \ldots, n$. 

\begin{definition}
Consider the triple $(Y, L, \alpha)$, where $Y$ is a closed, oriented 3-manifold, $L=L_1\cup \ldots \cup L_n$ is a link in $Y$, and $\alpha=(\alpha_1, \ldots, \alpha_n)$ is a multi-slope on $L$. Let $Z$ be a closed, oriented three-manifold. 
If $Y_{\alpha_I}(L_I) = Z$ for \emph{any} nonempty subset $I$ of $\{1, ..., n\}$, then $(Y, L, \alpha)$ \emph{realizes} an $n$-adjacency to $Z$. We say that $Y$ is \emph{integrally} $n$-adjacent to $Z$ if the multi-slopes are integral and \emph{rationally} adjacent to $Z$ otherwise.
\end{definition}

Notice that $n$-adjacency is not a symmetric relation.

In order to circumvent the difficulty in describing surgeries in arbitrary three-manifolds, we take the following perspective.

Suppose that $L$ is an $n$-component link in a three-manifold $Y$ with a surgery to $S^3$ along the multi-slope $\alpha$.  Let $J$ be the core of the surgery in $S^3$, and write $J = J_1 \cup \ldots \cup J_n$.  Then, we will associate to $J$ rational numbers $r=(r_1,\ldots, r_n)$ which describe how to ``undo'' the surgery performed on each $J_i$.  To be more precise, in the exterior of $L$, we have two slopes $\eta_i$ and $\alpha_i$ on the boundary torus coming from $L_i$:  $\eta_i$ is the meridian of $L_i$ and Dehn filling along $\alpha_i$ corresponds to the non-trivial surgery we are going to do to get to $S^3$.  Viewing $J$ as a link in $S^3$, we still can view these slopes in the boundary of a neighborhood of $J_i$.  Express $\eta_i = p_i \mu_i + q_i \lambda_i$, where $\mu_i, \lambda_i$ are the meridian and longitudes for $J_i$ as a knot in $S^3$.  (Note that $\mu_i = \alpha_i$.)  We say that the slopes $\eta_i$ and $\alpha_i$ are dual to each other.  We calculate these in the following order: 
\begin{enumerate}
\item First, perform surgery on {\em all} components of $L$ to get $J$ in $S^3$, not just some of the components (which still produces $S^3$ if $L$ is realizing an $n$-adjacency).
\item Then, identify the slopes $\eta_i$ with $r_i=p_i/q_i$.      
\end{enumerate}   
Note that the adjacency is integral if and only if all $p_i/q_i$ are integral.  This is because $\Delta(\eta_i,\alpha_i) = \Delta(\mu_i, p_i \mu_i + q_i \lambda_i) = |q_i|$.  In general, when discussing the dual link of a surgery to $S^3$, we will assume that it naturally inherits these rational surgery slopes in $S^3$ as above.

Now, the data $(S^3, J, r)$ in $S^3$ actually recovers $(Y,L,\alpha)$.  Performing surgery on all components of $J$ gives $Y$ and by construction $L$ is the core of the surgery on $J$ while the meridian of each $J_i$ becomes the slope $\alpha_i$.  But, we can also recover sublinks in the following way.  If we look at a sublink $J'$ of $J$, without loss of generality, $J_1 \cup \ldots \cup J_k$, then surgery on $J'$ produces the same manifold as surgery in $Y$ on $L_{k+1} \cup \ldots \cup L_n$.  And further, the core of surgery on $J'$, a $k$-component link, is exactly the {\em image} of $L_1 \cup \ldots \cup L_k$ in the surgery on $L_{k+1} \cup \ldots \cup L_n$.  

Now we return to the case that a rational surgery on $L$ is realizing an $n$-adjacency from $Y$ to $S^3$.  Build $(S^3, J,r)$ as discussed.  The above paragraph can be reinterpreted as saying that doing the corresponding surgery on every proper sublink of $J$ gives $S^3$.  For the benefit of the reader, we summarize this discussion with the following proposition.

\begin{proposition}\label{prop:dual}
Let $\alpha$ be a multi-slope on an $n$-component link $L$ in $Y$.  Then $(Y,L,\alpha)$ realizes an $n$-adjacency to $S^3$ if and only if there exists a multi-slope $\beta$ on a link $J$ in $S^3$ such that: 
\begin{enumerate}
\item $S^3_\beta(J) = Y$;
\item $L$ is the core of the surgery on $J$;
\item each $\alpha_i$ is the dual slope to $\beta_i$; 
\item surgery on every proper sublink of $J$ yields $S^3$.
\end{enumerate}
Furthermore, the adjacency is integral if and only if $\beta$ is integral.  
\end{proposition}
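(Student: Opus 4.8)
The plan is to formalize the discussion preceding the statement; its essential content is that Dehn filling is a reversible operation on link exteriors, and that the cores of the surgery solid tori form a link in the surgered manifold whose own surgery, along suitably recorded \emph{dual} slopes, undoes the original surgery.

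First I would work in the common exterior $E = Y \setminus N(L)$, with boundary tori $T_1,\dots,T_n$; on each $T_i$ I record the meridian $\eta_i$ of $L_i$ together with the surgery slope $\alpha_i$. Filling $E$ along $(\eta_1,\dots,\eta_n)$ recovers $(Y,L)$ with $L$ the cores of the filling tori, while filling along $(\alpha_1,\dots,\alpha_n)$ yields $Y_\alpha(L)$; when this is $S^3$ — which holds under the adjacency hypothesis by taking $I=[n]$, and is exactly condition (1) in the converse — the cores of the filling solid tori form a link $J = J_1\cup\dots\cup J_n\subset S^3$ whose exterior is canonically identified with $E$. Under this identification I would write $\eta_i = p_i\mu_i + q_i\lambda_i$ in the meridian--longitude basis of $J_i\subset S^3$, noting $\mu_i = \alpha_i$, and set $\beta_i = p_i/q_i$; since $\Delta(\eta_i,\alpha_i) = |q_i|$, the multi-slope $\beta$ is integral precisely when each $\alpha_i$ is an integral slope on $L_i$, which settles the ``furthermore'' clause.

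The crux is the sublink dictionary. For any $I\subseteq[n]$, performing $\alpha$-surgery on the sublink $L_I\subset Y$ is the same as filling $E$ along $\alpha_i$ for $i\in I$ and along the trivial (meridional) slope $\eta_j$ for $j\notin I$. Reading the same partial filling from the $J$-side, $\alpha_i = \mu_i$ is the trivial filling of $J_i$, while $\eta_j = \beta_j$ is $\beta_j$-surgery on $J_j$; hence $Y_{\alpha_I}(L_I) = S^3_{\beta_{[n]\setminus I}}(J_{[n]\setminus I})$, with the core links identified as well. Granting this, the proposition is immediate in both directions: the adjacency condition ``$Y_{\alpha_I}(L_I) = S^3$ for every nonempty $I$'' translates, via $I\mapsto [n]\setminus I$, into ``$S^3_{\beta_{I'}}(J_{I'}) = S^3$ for every proper sublink $J_{I'}$'' — the empty sublink being automatic, and the full link $J$, which corresponds to the excluded case $I=\emptyset$, giving $Y$ by (1) — and this is exactly (4), while (2) and (3) merely record the construction of $J$ and $\beta$; the converse runs the same equivalence backwards from (1)--(4).

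The main obstacle is not depth but bookkeeping: I must check that the single manifold $E$ carries \emph{both} descriptions compatibly, i.e. that the homeomorphism $E\cong S^3\setminus N(J)$ matches the slope labels on all the $T_i$ simultaneously, so that the partial-filling description of a sublink surgery is valid when read from either side and the resulting core links are genuinely the same link (not merely that the ambient manifolds are homeomorphic). Once the slope dictionary $\eta_i\leftrightarrow\beta_i$, $\alpha_i\leftrightarrow\mu_i$ on the fixed exterior $E$ is in place, the rest is a direct translation.
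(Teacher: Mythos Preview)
Your proposal is correct and follows essentially the same approach as the paper: the paper does not give a separate proof of this proposition but rather states it as a summary of the preceding discussion, which is precisely the exterior-filling/dual-slope bookkeeping you have written out. Your sublink dictionary $Y_{\alpha_I}(L_I) = S^3_{\beta_{[n]\setminus I}}(J_{[n]\setminus I})$ and the distance computation $\Delta(\eta_i,\alpha_i)=|q_i|$ for the integrality clause are exactly the paper's argument, spelled out somewhat more carefully.
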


\subsection{The linking of the dual curves}
  In light of the dual perspective from Proposition~\ref{prop:dual}, we want to understand the effects of surgery on links in $S^3$ whose sublinks also surger to $S^3$.  The next lemma allows us to constrain the linking numbers and surgery coefficients for the dual link in $S^3$ arising from an $n$-adjacency.  

\begin{lemma}
\label{linking number n=2}
Suppose that $(S^3, J,\alpha)$ realizes a 2-adjacency to $S^3$.  Then either the linking number of $J$ is zero, or the linking number is $\pm 1$ and the surgery coefficients $\alpha_i$ are $\pm (1,1/2)$ or $\pm (1/2, 1)$.   
\end{lemma}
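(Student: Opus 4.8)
The strategy is to analyze the two surgeries $S^3_{\alpha_1}(J_1) = S^3$ and $S^3_{\alpha_2}(J_2) = S^3$ (forced by the $2$-adjacency, applied to the one-component sublinks) together with the full surgery $S^3_{(\alpha_1,\alpha_2)}(J) = S^3$. First I would invoke the knot complement theorem (Gordon--Luecke) to conclude that each $J_i$ is an unknot, since a nontrivial surgery on $J_i$ yields $S^3$; moreover nontrivial surgery on a nontrivial knot never gives $S^3$, so in fact $\alpha_i$ must be a slope on an unknot, and the only way a slope $\alpha_i = p_i/q_i$ on an unknot gives $S^3$ is that $p_i = \pm 1$ (an unknot with $1/q_i$ surgery gives $S^3$). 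So each $\alpha_i = 1/k_i$ for some $k_i \in \mathbb{Z}^*$.

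**Using linking and the homology computation.** Next I would compute $H_1$ of $S^3_{(\alpha_1,\alpha_2)}(J)$. With $\alpha_i = p_i/q_i$, the surgery matrix is $\begin{pmatrix} p_1 & q_1 \ell \\ q_2 \ell & p_2 \end{pmatrix}$ where $\ell = \mathrm{lk}(J_1, J_2)$ (after clearing denominators appropriately); requiring $S^3$ forces this determinant to be $\pm 1$, i.e. $p_1 p_2 - \ell^2 q_1 q_2 = \pm 1$. Substituting $p_i = \pm 1$: if $\ell = 0$ we are done (linking number zero case). If $\ell \neq 0$, then $\pm 1 - \ell^2 q_1 q_2 = \pm 1$, which forces $\ell^2 q_1 q_2 \in \{0, \pm 2\}$, hence $\ell^2 q_1 q_2 = \pm 2$, and since $\ell^2$ divides this we get $\ell^2 = 1$, so $\ell = \pm 1$, and then $q_1 q_2 = \pm 2$, forcing $\{|q_1|, |q_2|\} = \{1, 2\}$ (as $q_i \neq 0$). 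This pins down the slopes to be $\pm(1, 1/2)$ or $\pm(1/2, 1)$ up to the sign/orientation bookkeeping. I should be slightly careful about the sign conventions in the determinant condition (orientation of $S^3$ versus $-S^3$, which are the same manifold), so the homology argument only gives determinant $\pm 1$; that is exactly what the above case analysis uses.

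**Ruling out the remaining sign combinations.** The homology argument alone may permit a few slope combinations that do not actually surger to $S^3$ (e.g.\ one must check $+(1,1/2)$ versus $(-1, 1/2)$ etc.). To eliminate these I would do an explicit Kirby calculus / Rolfsen twist computation on the Hopf link with slopes $(1/k_1, 1/k_2)$ where $\{|k_1|,|k_2|\} = \{1,2\}$: a Rolfsen twist along the $\pm 1$-framed unknot component slides the framing of the other component and unlinks them, and one checks directly which sign combinations leave the other component with a $1/k$ framing (hence $S^3$) versus leaving a framing that yields a lens space. This is where the stated constraint ``$\pm(1,1/2)$ or $\pm(1/2,1)$'' gets its precise sign condition. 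I expect \textbf{this last bookkeeping step --- tracking the exact admissible sign combinations via Rolfsen twists on the Hopf link --- to be the main obstacle}, since the homology obstruction is coarse and one genuinely needs the Kirby-calculus computation to see that, say, $(1/2, 1/2)$ or $(-1/2, 1)$ surgery on the Hopf link does not give $S^3$ while $(1/2, 1)$ does. Everything else (Gordon--Luecke for the components, the determinant computation) is routine.
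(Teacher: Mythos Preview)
Your core approach---the linking/surgery matrix determinant computation---is exactly what the paper does, and it is already the whole proof. Two simplifications are worth noting.

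First, you do not need Gordon--Luecke here. Since $S^3_{\alpha_i}(J_i)=S^3$ has trivial $H_1$, and $|H_1(S^3_{p/q}(K))|=|p|$ for any knot $K$ in $S^3$, you get $\alpha_i=1/q_i$ directly from homology, without knowing $J_i$ is unknotted.

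Second, your third step---the Kirby-calculus sign check you flag as the ``main obstacle''---is unnecessary. Once you normalize each slope as $1/q_i$ (so the numerators are $+1$, which is harmless since $p/q=(-p)/(-q)$), the determinant condition reads
\[
|1 - q_1 q_2 \ell^2| = 1,
\]
giving $q_1 q_2 \ell^2 \in \{0,2\}$ only (not $-2$). When $\ell \neq 0$ this forces $\ell^2=1$ and $q_1 q_2 = +2$, hence $(q_1,q_2)\in\{(1,2),(2,1),(-1,-2),(-2,-1)\}$, i.e.\ exactly $\alpha=\pm(1,1/2)$ or $\pm(1/2,1)$ with the signs tied together. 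The mixed-sign cases like $(-1,1/2)$ that worried you simply do not survive the homology computation once the numerators are normalized, so no Rolfsen-twist verification is needed. (Also note the lemma is a necessary condition only, so you never have to exhibit that any particular surgery \emph{does} give $S^3$.)
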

\begin{proof}
Since surgery on each individual component of $J = J_1 \cup J_2$ produces $S^3$, an integer homology sphere, the surgery coefficient for $J_i$ is of the form $1/q_i$.  The linking matrix for the  surgery presentation on $J$ then gives 
\[
1 = \left |det \begin{pmatrix} 1 & q_2 \ell \\ q_1 \ell & 1 \end{pmatrix} \right | = |q_1 q_2 \ell^2 - 1|,
\]
where $\ell$ is the linking number of $J_1$ and $J_2$ (say after choosing orientations of each component).  
Since $q_1, q_2 \neq 0$, we see that $|\ell| = 0$ or $1$.  If $|\ell| = 1$, then we must have that $(q_1,q_2) = \pm (1,2)$ or $\pm (2, 1)$, as desired.  
\end{proof}

\begin{proposition}
\label{prop:linking}
Suppose that $J$ is an $n$-component link in $S^3$, with $n \geq 3$, and $\alpha$ is a multi-slope on $J$.  If surgery on every proper sublink of $J$ produces   $S^3$, then all pairwise linking numbers are 0 or $\pm 1$.  If $J_1$ and $J_2$ are a pair of components  with $|\ell k(J_1,J_2)| = 1$, then the slopes are $\pm (1,1/2)$ or $\pm (1/2, 1)$.  If $n = 3$ and $S^3_\alpha(J)$ is an integer homology sphere or $n \geq 4$, then each of $J_1$ and $J_2$ has linking number zero with all other components.
\end{proposition}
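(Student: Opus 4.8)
The plan is to derive the first two claims directly from Lemma~\ref{linking number n=2}, applied to the two-component sublinks of $J$, and then to settle the last claim by a short homological computation once the surgery coefficients have been pinned down.

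First I would record what the hypothesis says about small sublinks. Since $n\ge3$, each single component $J_i$ and each two-component sublink $J_i\cup J_j$ is a \emph{proper} sublink of $J$, so surgery on each of these produces $S^3$. Because surgery on $J_i$ alone yields an integer homology sphere, the slope $\alpha_i$ must be of the form $1/q_i$ with $q_i\in\mathbb{Z}^*$. Moreover, for each $i\ne j$ the triple $(S^3, J_i\cup J_j, (\alpha_i,\alpha_j))$ realizes a $2$-adjacency to $S^3$, so Lemma~\ref{linking number n=2} gives that either $\ell k(J_i,J_j)=0$, or $\ell k(J_i,J_j)=\pm1$ and $(\alpha_i,\alpha_j)$ equals $\pm(1,1/2)$ or $\pm(1/2,1)$; in the latter case $q_iq_j=2$. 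This already proves the first two assertions.

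Next I would treat the case $n=3$ with $Y:=S^3_\alpha(J)$ an integer homology sphere. Fix a pair, say $J_1,J_2$, with $\ell k(J_1,J_2)=\pm1$, and write $\ell_{ij}=\ell k(J_i,J_j)$, so $\ell_{12}^2=1$ and $q_1q_2=2$. Suppose toward a contradiction that $\ell_{13}\ne0$; the case $\ell_{23}\ne0$ is symmetric. Then $\ell_{13}^2=1$ and $q_1q_3=2$. If in addition $\ell_{23}\ne0$, then $q_2q_3=2=q_1q_3$ forces $q_1=q_2$, hence $q_1^2=2$, which is impossible; so $\ell_{23}=0$. The order of $H_1(Y)$ equals $|q_1q_2q_3\det\Lambda|$, where $\Lambda$ has the slopes $\alpha_i=1/q_i$ on the diagonal and the linking numbers $\ell_{ij}$ off the diagonal; expanding this $3\times3$ determinant,
\[
|H_1(Y)| = \bigl|\,1 - q_2q_3\,\ell_{23}^2 - q_1q_3\,\ell_{13}^2 - q_1q_2\,\ell_{12}^2 + 2\,q_1q_2q_3\,\ell_{12}\ell_{13}\ell_{23}\,\bigr|.
\]
With $\ell_{23}=0$, $\ell_{12}^2=\ell_{13}^2=1$, and $q_1q_2=q_1q_3=2$, this is $|1-2-2|=3\ne1$, contradicting that $Y$ is a homology sphere. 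Hence $\ell_{13}=0$, and symmetrically $\ell_{23}=0$, which is the third claim when $n=3$.

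Finally, for $n\ge4$ I would reduce to the case just handled. Given a linking-number-one pair $J_1,J_2$ and any other component $J_k$, the three-component sublink $J'=J_1\cup J_2\cup J_k$ is proper in $J$ (since $n\ge4$), so $S^3_{\alpha'}(J')=S^3$ is an integer homology sphere; and every proper sublink of $J'$ is also a proper sublink of $J$ and thus surgers to $S^3$. Applying the $n=3$ case to $J'$ yields $\ell k(J_1,J_k)=\ell k(J_2,J_k)=0$, as required. I do not anticipate any real obstacle here: the only step requiring care is the bookkeeping of signs --- in particular that a linking-number-one pair forces $q_iq_j=2$ and not $-2$ --- which is precisely what Lemma~\ref{linking number n=2} supplies, the rest being the routine expansion of a $3\times3$ surgery determinant.
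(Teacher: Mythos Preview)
Your proof is correct and follows essentially the same strategy as the paper: apply Lemma~\ref{linking number n=2} to every two-component sublink to get the first two claims, and then for the last claim compute the order of $H_1$ of the surgery on a three-component sublink via the $3\times 3$ linking/surgery determinant to reach the contradiction $3\neq 1$. The only cosmetic differences are that the paper fixes $\alpha_1=1$, $\alpha_2=1/2$ up to symmetry and writes out two explicit $3\times 3$ matrices, while you keep the determinant in the symmetric form and use $q_iq_j=2$; and the paper treats $n=3$ and $n\ge4$ simultaneously (noting that surgery on $J_1\cup J_2\cup J_3$ is an integer homology sphere in either case), whereas you first settle $n=3$ and then reduce $n\ge4$ to it.
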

\begin{proof}
Since $n \geq 3$, every two-component sublink of $J$ with induced multi-slope from $\alpha$ provides a 2-adjacency from $S^3$ to itself. 
Therefore, by Lemma~\ref{linking number n=2}, the pairwise linking numbers are 0 or $\pm 1$ and for the 2-component sublinks with linking number having absolute value 1, the surgery coefficients are $\pm (1,1/2)$ or $\pm (1/2,1)$. 

Now suppose that $S^3_\alpha(J)$ is an integer homology sphere.  It remains to consider the pairwise linking of $J_1, J_2$ with the other components.  Let $J_3$ be another component.  Then, we know that the associated surgery on $J_1 \cup J_2 \cup J_3$ produces $S^3_\alpha(J)$ if $n = 3$ and $S^3$ if $n > 3$.  Either way, the result is an integer homology sphere.  Orient $J_1, J_2$ such that the pairwise linking is 1, fix an orientation on $J_3$ and let $\ell_1, \ell_2$ be the linking numbers of $J_3$ with $J_1, J_2$ respectively.  Without loss of generality, the surgery coefficients on $J_1$ and $J_2$ are 1 and $1/2$ respectively.  (Otherwise, rearrange the order of the components and/or mirror $J$ and reverse the signs of $\alpha$.)  Suppose for contradiction that $\ell_1, \ell_2$ are not both zero.     

The first case is that $\ell_1  \neq 0$.  In this case, by applying the first part of the proposition to the pair $(J_1, J_3)$, we see that $\ell_1 =1$ and the surgery coefficient for $J_3$ must be $1/2$.  Applying the first part of the proposition to the pair $(J_2, J_3)$, we see that $J_2$ and $J_3$ have linking number zero, since the pair of surgery coefficients is not $\pm (1/2,1)$ or $\pm (1,1/2)$.  In this case, the linking matrix for the 3-component surgery description computes the order of $H_1$ of the surgery on $J_1 \cup J_2 \cup J_3$ to be: 
\[
1 = \left |det \begin{pmatrix} 1 & 2 &  2 \\ 1 & 1 & 0 \\ 1 & 0 & 1 \end{pmatrix} \right | = 3,
\]
a contradiction.  

The other case is that $\ell_1=0$, and so $\ell_2 = 1$. Now we see the surgery coefficients are $1$ for $J_1$ and $J_3$, and $1/2$ for $J_2$.  Then, we compute again
\[
1 = \left | det \begin{pmatrix} 1 & 2 & 0 \\ 1 & 1 & 1 \\ 0 & 2 & 1 \end{pmatrix} \right | = 3,  
\]
another contradiction.  This completes the proof.  
\end{proof}

\section{Self-adjacencies from $S^3$}
In this section, we constrain the self-adjacencies from $S^3$ to itself.  
As a warm-up, we begin with a special case.  

\begin{proposition}\label{prop:brunnian-self-adjacency}
Suppose $(S^3,J,\alpha)$ realizes an $n$-adjacency to $S^3$ and the pairwise linking numbers of $J$ vanish.  Then $J$ is the unlink and $\alpha_i = 1/k_i$ for $k_i \in \mathbb{Z}^*$ for all $i$.  
\end{proposition}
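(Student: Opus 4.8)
The plan is to induct on $n$, using the hypothesis that all pairwise linking numbers vanish to reduce both to the case $n=1$ and to the inductive hypothesis for $(n-1)$-component sublinks. First I would dispose of the surgery coefficients: since surgery on each single component $J_i$ yields $S^3$, a homology sphere, the linking matrix entry forces the slope to be of the form $\alpha_i = 1/k_i$ with $k_i \in \mathbb{Z}^*$ (the nonzero integers); this is immediate and requires no linking-number assumption. The real content is showing $J$ is the unlink, and here the vanishing of pairwise linking numbers is what lets the argument go through.

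The inductive step: assume the result for links with fewer than $n$ components. Fix $J = J_1 \cup \dots \cup J_n$ realizing an $n$-adjacency with vanishing pairwise linking. Consider the sublink $J' = J_1 \cup \dots \cup J_{n-1}$; it realizes an $(n-1)$-adjacency to $S^3$ (surgery on any nonempty sublink of $J'$ is surgery on a nonempty proper sublink of $J$, hence gives $S^3$), and its pairwise linking numbers still vanish, so by induction $J'$ is the unlink with slopes $1/k_i$. Now I want to conclude that adding back $J_n$ keeps it an unlink. The key point is to exploit the dual/core perspective of Proposition~\ref{prop:dual}: since $S^3_{\alpha'}(J') = S^3$ and $J'$ is a $1/k_i$-framed unlink, the surgery on $J'$ is a sequence of $\pm 1$-type twists along unknotted, unlinked curves, and after this surgery the image of $J_n$ is a knot $K$ in $S^3$ whose surgery (at the dual slope) gives $S^3$ again — so $K$ is either trivial or has surgery coefficient making it a cosmetic-type surgery. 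More usefully, reversing roles: for each $i$, the sublink $J$ minus $J_i$ is an unlink, so surgery on it is trivial in a strong sense, and $J_i$ sits in the complement of an unlink.

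The cleanest route I would actually take is via the core link directly: by Proposition~\ref{prop:dual}, $L$ (the core of the full surgery on $J$) realizes an $n$-adjacency of $Y = S^3_\alpha(J)$ to $S^3$, and since all $\alpha_i = 1/k_i$ and all pairwise linking numbers of $J$ vanish, the linking matrix of the full surgery presentation is $I_n$ (off-diagonal entries are $k_i k_j \ell_{ij} = 0$), so $Y$ is a homology sphere; moreover surgery on the whole of $J$ must also give $S^3$? No — that is not assumed. Instead I would argue: since $J \setminus J_i$ is an unlink for every $i$, each $J_i$ is an unknot (it is a component of the unlink $J\setminus J_j$ for $j \neq i$), unknotted and contained in a ball disjoint from the other components' obvious disks; the vanishing linking numbers then let me isotope the spanning disk of each $J_i$ off all the others, producing a collection of disjoint spanning disks, i.e. $J$ is the unlink. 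The one gap to fill carefully is that "each proper sublink is an unlink" does not a priori give "the spanning disk of $J_i$ avoids $J_j$" — for that I would use that $J_i \cup J_j$ is a two-component unlink (a proper sublink since $n \geq 2$; when $n=1$ there is nothing to prove), hence $J_i$ bounds a disk in the complement of $J_j$, and then a general-position / disk-swapping argument to make all $n$ disks simultaneously disjoint.

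The main obstacle I expect is precisely this last simultaneity step: promoting "each $J_i$ bounds a disk disjoint from each individual $J_j$" to "all $J_i$ bound mutually disjoint disks disjoint from the whole rest of the link." Pairwise unlinkedness does not imply the link is trivial in general (Brunnian links are the obstruction), so the argument must genuinely use more than pairwise data — it must use that \emph{every} proper sublink, not just the two-component ones, is trivial. I would handle this by downward induction on the number of components, using the induction hypothesis that $J \setminus J_n$ is the \emph{unlink} (not merely pairwise unlinked): take disjoint disks $D_1, \dots, D_{n-1}$ for $J_1, \dots, J_{n-1}$, take any disk $D_n$ for $J_n$ (which exists since $J_n$ is unknotted), and remove the intersections of $D_n$ with $D_1 \cup \dots \cup D_{n-1}$ by the standard innermost-disk / surgery-on-$D_n$ argument, which works because the $D_i$ are already disjoint from $J_n$ and from each other. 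This is routine once set up correctly, and I would not grind through it here.
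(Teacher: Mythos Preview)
Your inductive framework and the reduction to ``$J$ is Brunnian'' are fine, and the observation that each $\alpha_i = 1/k_i$ is correct.  The fatal gap is in the final step.  You assert that the spanning disks $D_1,\ldots,D_{n-1}$ for the unlink $J\setminus J_n$ ``are already disjoint from $J_n$,'' and then run an innermost-disk argument on $D_n$.  But that assertion is exactly what fails for a nontrivial Brunnian link: for the Borromean rings, any pair of disjoint disks for two components \emph{must} meet the third.  Your argument never uses the hypothesis that surgery on the \emph{full} link $J$ yields $S^3$; as written it would prove that every link whose proper sublinks are all trivial is itself trivial, i.e.\ that Brunnian links do not exist.  The ``routine'' simultaneity step you defer is therefore not routine --- it is the entire content of the proposition, and it cannot be done by disk surgery alone.

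The paper's proof is genuinely different and relies on Dehn-surgery machinery you have avoided.  For $n=1$ it invokes the knot complement theorem of Gordon--Luecke; for $n=2$ it uses Gabai's classification of knots in solid tori with nontrivial solid-torus surgeries (together with the vanishing linking number) to force the link to be split.  For $n\geq 3$, having established that $J$ is Brunnian, the paper exploits the full-surgery hypothesis as follows: the image of $J_n$ after surgery on $J\setminus J_n$ is unknotted, so one can replace $\alpha_n$ by $1/5$ and still get $S^3$; the image of $J\setminus J_n$ after $1/5$-surgery on $J_n$ is then, by induction, again an unlink.  Thus two fillings of the exterior of $J$ along $\partial N(J_n)$ --- at slopes $\infty$ and $1/5$, of distance $5$ --- are both reducible, contradicting the Gordon--Litherland bound of $4$ unless the exterior of $J$ was already reducible.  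A split Brunnian link is an unlink.  This use of the reducible-filling distance bound is the missing idea; there is no elementary disk argument that replaces it.
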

\begin{proof}
First, it is clear that $\alpha_i = 1/k_i$ for each $i$ by a homological computation.  We proceed by induction to show that $J$ is the unlink.  The case of $n = 1$ is handled by the knot complement theorem \cite{GordonLuecke}.  Next we do the case of a 2-component link, $J_1 \cup J_2$.    From the $n = 1$ case, each component is unknotted. Since $1/k_2$-surgery on $J_2$ is $S^3$, the image of $J_1$ in $1/k_2$-surgery on $J_2$ has a surgery to $S^3$.  Hence, the image of $J_1$ is also unknotted.  In particular, the component $J_2$ in the complement of $J_1$ in $S^3$ is a knot in a solid torus which has a non-trivial solid torus surgery.  By \cite{Gabai}, $J_1$ is contained in a ball or is a braid in the solid torus, so has non-zero winding number. However, because $\ell k(J_1,J_2)=0$, it must be the case that $J_1$ is contained in a ball in the complement of $J_2$ in $S^3$, meaning $J_1$ and $J_2$  are unlinked.  This completes the proof for $n = 2$ components.  

For the inductive step, our hypothesis is that $J$ is a Brunnian link and then we will deduce that $J$ is in fact an unlink. This can be found in \cite[Proposition 4.1]{GLLM}, but for self-containedness, we present an elementary proof that does not rely on Heegaard Floer homology.  Suppose the result is true for $(n-1)$-component links and that $(S^3,J,\alpha)$ realizes an $n$-adjacency to $S^3$.  Then $J$ is Brunnian, $J_1 \cup \ldots \cup J_{n-1}$ is an unlink, and so $(1/k_1,\ldots, 1/k_{n-1})$-surgery on $J_1 \cup \ldots \cup J_{n-1}$ gives $S^3$.  Thus the image of $J_{n}$ must be unknotted after this surgery.  Hence, we see that $(1/k_1, \ldots, 1/k_{n-1}, 1/m)$-surgery on $J$ gives $S^3$ for arbitrary $m$. For the sake of concreteness, fix $m=5$. 

We claim that the image of $J_1 \cup \ldots \cup J_{n-1}$ after performing $1/5$-surgery on $J_n$, denoted $K_1 \cup \ldots \cup K_{n-1}$, yields an $(n-1)$-component Brunnian link. To see that the $(n-1)$-component image link is Brunnian, note that all $(n-1)$-component sublinks of $J$ are unlinks by our inductive hypothesis. In particular, $J_1\cup\ldots\cup J_{n-2}\cup J_n$ is an unlink, hence $1/5$-surgery along $J_n$ shows that $K_1 \cup\ldots\cup K_{n-2}$ is an unlink.  A similar argument applies to the other $n-2$-component sublinks of $K_1\cup \cdots \cup K_{n-1}$.

Thus, the link $K_1 \cup \cdots \cup K_{n-1}$ is a Brunnian link with a surgery to $S^3$, and so is an unlink by induction.  In other words, $1/5$-surgery on $J_n$ in the exterior of $J_1 \cup \ldots \cup J_{n-1}$ produces a reducible 3-manifold (the exterior of an $(n-1)$-component unlink).  
Yet, the distance between the trivial slope $\infty$ and $1/5$ is $5$. A theorem of Gordon and Litherland 
\cite[Theorem 1.1]{GL} implies that for any pair of reducible Dehn fillings on an irreducible manifold, the slopes have distance at most four. This is a contradiction.  
This implies that the exterior of $J$ must be reducible.  Hence, $J$ is split.  
However, a split Brunnian link is an unlink.
\end{proof}

We now build on the previous proposition to complete our characterization of the self-adjacencies of $S^3$ promised in the introduction.  

\begin{proposition}
\label{prop:rational-self-adjacency}
The triple $(S^3, J, \alpha)$ realizes an $n$-adjacency to $S^3$ if and only if $J$ itself is a split union of Hopf links and unknots, all slopes $\alpha_i = 1/k_i$, where $k_i\in\mathbb{Z}^*$, and the surgery slopes of Hopf components are either $\pm(1, 1/2)$ or $\pm(1/2, 1)$. 
\end{proposition}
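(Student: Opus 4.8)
The plan is to prove both implications, with the backward (sufficiency) direction a short Kirby-calculus computation and the forward direction an induction on $n$ built on Propositions~\ref{prop:brunnian-self-adjacency} and \ref{prop:linking}.

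For sufficiency, suppose $J$ is a split union of Hopf links (with slopes $\pm(1,1/2)$ or $\pm(1/2,1)$) and unknots (with slopes $1/k_i$). Surgery on any sublink of a split link is the connected sum of the surgeries induced on the split pieces, so it suffices to check that surgery on any nonempty sublink of a single unknot, and of a single Hopf pair, yields $S^3$. The unknot case is immediate. For a Hopf pair, surgery on one component is $\pm 1$- or $\pm\tfrac12$-surgery on an unknot, and surgery on both components yields $S^3$ by a slam-dunk move: the $\pm1$-framed component is an integer-framed meridian of its partner, and sliding it in converts the partner's $\pm\tfrac12$ coefficient into $\mp 1$, a surgery on an unknot.

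For necessity I would induct on $n$. When $n=1$, the knot complement theorem \cite{GordonLuecke} forces $J$ to be an unknot with slope $1/k$. When $n=2$, Lemma~\ref{linking number n=2} says the linking number $\ell$ is $0$ or $\pm1$; if $\ell=0$ then Proposition~\ref{prop:brunnian-self-adjacency} gives the $2$-component unlink. If $|\ell|=1$, the slopes are $\pm(1,1/2)$ or $\pm(1/2,1)$ and each component is an unknot (again \cite{GordonLuecke}), so $X_1:=S^3\setminus N(J_1)$ is a solid torus containing $J_2$ with winding number $1$; the nontrivial slope $\tfrac12$ on $J_2$ turns $X_1$ into a solid torus, since $(X_1)_{1/2}(J_2)$ is the exterior in $S^3$ of the image of $J_1$, which is unknotted by \cite{GordonLuecke}. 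By Gabai's theorem \cite{Gabai}, $J_2$ is then either contained in a ball in $X_1$ --- impossible, since its winding number is $1$ --- or a braid in $X_1$, in which case the winding number equals the braid index and $J_2$ is isotopic to the core of $X_1$; hence $J=J_1\cup J_2$ is the Hopf link.

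For $n\geq 3$ I would first show that $J$ is a split link and then recurse on its pieces. By the $n$-adjacency each component is an unknot with slope $1/k_i$ (\cite{GordonLuecke}), and by Proposition~\ref{prop:linking} the pairwise linking numbers are $0$ or $\pm1$, with every linking-$\pm1$ pair isolated (linking all other components trivially) and carrying slopes $\pm(1,1/2)$ or $\pm(1/2,1)$. Choose a component $J_1$ so that $J\setminus J_1$ --- which by the inductive hypothesis is a split union of Hopf links and unknots --- has at least two split pieces: for $n\geq4$ every choice works, since $J\setminus J_1$ then has at least three components; for $n=3$, Proposition~\ref{prop:linking} produces two components with linking number $0$, whose union is the unlink by Proposition~\ref{prop:brunnian-self-adjacency}, so deleting the remaining third component suffices. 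Now, as in the proof of Proposition~\ref{prop:brunnian-self-adjacency}, replace $\alpha_1$ by $1/5$: the image of $J_1$ after surgery on any sublink of $J\setminus J_1$ has a surgery to $S^3$ and so is unknotted, whence $(S^3,J,(1/5,\alpha_2,\dots,\alpha_n))$ is still an $n$-adjacency. Therefore the image of $J\setminus J_1$ after $1/5$-surgery on $J_1$ realizes an $(n-1)$-adjacency to $S^3$; its pairwise linking numbers are unchanged (since $J_1$ links at most one other component nontrivially), so by the inductive hypothesis it too is a split union of Hopf links and unknots with at least two split pieces. Thus the exterior of $J$ admits two reducible Dehn fillings along the $J_1$-boundary torus, at slopes $\infty$ and $1/5$, which have distance $5$; the Gordon--Litherland bound \cite[Theorem 1.1]{GL}, limiting the distance between two reducible fillings of an irreducible manifold to $4$, then forces the exterior of $J$ to be reducible, i.e.\ $J=A\sqcup B$ with both sublinks nonempty. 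To finish, every nonempty sublink of $A$ (respectively $B$) is a proper sublink of $J$ and so surgers to $S^3$, so $(S^3,A,\alpha|_A)$ and $(S^3,B,\alpha|_B)$ realize self-adjacencies with fewer than $n$ components; the inductive hypothesis identifies $A$ and $B$ as split unions of Hopf links and unknots with the stated slopes, and hence so is $J$.

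The main obstacle is the ``$J$ is split'' step: one must choose the component to delete so that the remaining link genuinely has at least two split pieces --- which is exactly why the $n=3$ case needs the full strength of Proposition~\ref{prop:linking} rather than only the $2$-component linking estimate --- and one must verify that replacing $\alpha_1$ by $1/5$ preserves both the $n$-adjacency and the split structure of the companion link, so that the Gordon--Litherland distance argument applies. The one other place where genuine $3$-manifold input (Gabai's solid-torus surgery theorem) is needed, rather than formal bookkeeping, is the linking-$\pm1$ subcase of the base case $n=2$.
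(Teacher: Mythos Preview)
Your proof is correct, but it takes a genuinely different route from the paper's for the splitting step. The paper, once it has a Hopf pair $J_1\cup J_2$ among the components, fills along the component with slope $\pm 1/2$: the trivial filling and the $1/2$-filling each leave an $(n-1)$-component link realizing a self-adjacency, hence (by induction and the linking constraints) a split link; since these two reducible fillings are at distance~$2$, the paper invokes \cite[Corollary 2.4.7]{CGLS} (one of the two fillings would have to contain a lens space summand, impossible in a link exterior in $S^3$) to conclude that the exterior of $J$ is already reducible. Your argument instead replaces $\alpha_1$ by $1/5$ on a judiciously chosen component, shows this preserves the $n$-adjacency via the knot complement theorem, and then appeals to the Gordon--Litherland distance-$4$ bound \cite{GL} exactly as in the proof of Proposition~\ref{prop:brunnian-self-adjacency}. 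What you gain is a uniform treatment of all $n\geq 3$ that uses only the theorem already cited in Proposition~\ref{prop:brunnian-self-adjacency}, at the cost of the extra ``change the slope to $1/5$'' step and the need to track linking numbers through a Rolfsen twist (which you do correctly: since $J_1$ links at most one other component, the product $\ell k(J_i,J_1)\ell k(J_j,J_1)$ vanishes for all $i,j\neq 1$). The paper's approach is slightly more direct---no slope modification---but requires the stronger CGLS input and a separate $n=3$ analysis. You also prove the sufficiency direction explicitly, which the paper leaves implicit.
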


\begin{proof}
As in the proof of Proposition~\ref{prop:brunnian-self-adjacency}, the knot complement theorem implies that the components are unknotted and of course the surgery coefficients are of the form $1/k_i$.  

We begin with the case of $n = 2$. Consider $J_1\cup J_2$. The case that $\ell k(J_1, J_2)=0$ follows from Proposition \ref{prop:brunnian-self-adjacency}. By Lemma \ref{linking number n=2}, we assume $\ell k(J_1, J_2)=1$ and the surgery coefficients are $\pm (1,1/2)$. Note that $J_1$ is unknotted in $1/2$-surgery on $J_2$, so $J_2$ can again be viewed as a knot in the solid torus with a solid torus surgery. 
 Therefore, by \cite{Gabai}, $J_2$ is a braid in the complement of $J_1$ and the winding number is the linking number of $J_1$ and $J_2$.  The only winding number 1 braid in the solid torus is the core.  We see that $J_1 \cup J_2$ is a Hopf link.  

Next, we handle the case of $n = 3$.  If the pairwise linking numbers are zero, we appeal to Proposition~\ref{prop:brunnian-self-adjacency}.  So, assume some pair of components $J_1, J_2$ have nonzero linking number.  By Proposition~\ref{prop:linking}, $J_1, J_2$ have surgery coefficients $\pm (1,1/2)$ and linking number 1.  Further, by Proposition~\ref{prop:linking}, we have that $J_3$ is algebraically split from $J_1$ and $J_2$. By the $n = 2$ case of the proof, $J_1 \cup J_2$ must form a Hopf link.  We also have that $J_3$ is an unknot that is geometrically split from $J_1$ and $J_2$ individually, but possibly not split from the link $J_1 \cup J_2$.

It remains to show that $J_3$ is in fact split from $J_1 \cup J_2$. 
Since $J_1 \cup J_3$ is an unlink, trivial surgery on $J_2$ produces the 2-component unlink $J_1 \cup J_3$. 
Also, the image of  $J_1 \cup J_3$ under $1/2$-surgery on $J_2$ is a 2-component unlink because this image is a 2-component link, all of whose induced surgeries give $S^3$.  
We now appeal to \cite[Corollary 2.4.7]{CGLS}.  This states that if an irreducible three-manifold admits two reducible Dehn fillings along slopes of distance at least two on a torus boundary component, one of the filled manifolds contains a lens space summand.  However, a link complement in $S^3$ cannot contain a lens space summand.  As the surgered manifolds are link complements in $S^3$, the exterior of $J$ is  reducible, and so $J$ is a split link. 

Now, we complete the induction using a similar strategy.  Suppose that $J$ has $n$-components. By assumption, all proper sublinks are split unions of unlinks and Hopf links.  If $J$ has pairwise linking numbers all zero, we can again apply Proposition~\ref{prop:brunnian-self-adjacency}.  Therefore, up to reordering of the components, we have at least one two-component sublink $J_1 \cup J_2$ which is a Hopf link. Up to mirroring and reordering $J_1$ and $J_2$, the surgery coefficient on $J_1$ is $1/2$. Trivial surgery on $J_2$ produces a split link as does $1/2$-surgery, since the resulting link is an $(n-1)$-component link satisfying the same hypotheses of the theorem. Therefore, by applealing again to \cite[Corollary 2.4.7]{CGLS}, we get that the complement of $J$ is reducible, so $J$ is split.  Since all $(n-1)$-component sublinks are split unions of Hopf links and unknots, and because $J$ itself is split, we now have that $J$ is a split union of Hopf links and unknots.  
\end{proof}

\subsection*{Acknowledgements}  TL was supported by NSF DMS--2105469.  AHM was supported by NSF Grant DMS--2204148 and The Thomas F. and Kate Miller Jeffress Memorial Trust, Bank
of America, Trustee.

\bibliographystyle{alpha}
\bibliography{biblio}

\end{document}